\title{Notes on Exact Boundary Values in Residual Minimisation}
\newtheorem{setting}[theorem]{Setting}
\newcommand{\jm}[1]{\textcolor{black}{#1}}
\definecolor{dkgold}{rgb}{0.639, 0.53, 0.19}
\newcommand{\mz}{\color{black}}
\newcommand{\eee}{\color{black}}
 \let\Ginclude@graphics\@org@Ginclude@graphics
\begin{document}

\maketitle

\begin{abstract}
We analyse the difference in convergence mode using exact versus penalised boundary values for the residual minimisation of PDEs with neural network type ansatz functions, as is commonly done in the context of physics informed neural networks. It is known that using an $L^2$ boundary penalty leads to a loss of regularity of $3/2$ meaning that approximation in \(H^2\) yields a posteriori estimates in \(H^{1/2}\). These notes demonstrate how this loss of regularity can be circumvented if the functions in the ansatz class satisfy the boundary values exactly. Furthermore, it is shown that in this case, the loss function provides a consistent a posteriori error estimator in $H^2$ norm made by the residual minimisation method. We provide analogue results for linear time dependent problems and discuss the implications of measuring the residual in Sobolev norms.
\end{abstract}
\begin{keywords}%
{Residual minimisation, Physics informed neural networks, Neural networks}
\end{keywords}

\section{Introduction}
After their striking success in supervised learning tasks, neural network based methods have recently gained more attraction for problems from numerical analysis. Neural network based approaches for the approximate solutions of PDEs can be traced back to~\cite{dissanayake1994neural} where they proposed to combine residual minimisation with a boundary penalty term in order to train the parameters of a neural network. This ansatz was recently revived by \cite{sirignano2018dgm}, embracing increased computational power due to the heavy usage of GPUs and has received a growing amount of attention since then, see \cite{weinan2018deep, raissi2018hidden} and subsequent work.

Due to the unconstrained nature of neural networks, resolving boundary conditions for PDEs is challenging as noted by \cite{weinan2018deep, van2020optimally, cyr2020robust}. The work of \cite{lagaris1998artificial} proposes to encode boundary conditions directly into the trial functions. This approach was refined and extended by \cite{berg2018unified, lyu2020enforcing} and it was observed that such an ansatz frequently helps in producing more accurate solutions. 

As our main contribution we show that 
for residual minimisation $H^2$ approximation yields estimates in $H^2$ for ansatz classes with exact boundary values, see Theorem~\ref{thm:MainTheorem2}.
In contrast, 
using an $L^2$ penalty on the boundary values leads to error estimates in norms not stronger than $H^{1/2}$. 
The extension to more general equations, including time dependant ones, is discussed as well.

\subsection{Physics informed neural networks}\label{sec:pinns}
Suppose, we want to approximately solve the Dirichlet problem 
\begin{equation}\label{eq:Dir}
    \begin{split}
        -\Delta u & = f \quad \text{in } \Omega \\
    u & = g \quad \text{on } \partial\Omega,
    \end{split}
\end{equation}
where \(f\in L^2(\Omega)\) and \(g\in H^{3/2}(\partial\Omega)\). We assume that this problem is well posed and denote its solution by \(u_f\in H^2(\Omega)\)\mz, compare also to Remark~\ref{remark:regularity}. \eee There exist 
different approaches to this problem, which usually formulate the Dirichlet problem \eqref{eq:Dir} as a minimisation problem over a function space and then minimise this energy over a suitable ansatz class. 
One of those approaches was proposed by~\cite{lagaris1998artificial} is the method of \emph{residual minimisation}. 
It has received growing attention in the recent advancement of neural network based methods where it is often referred to as \emph{physics informed neural networks} or PINNs following the work of~\cite{raissi2019physics}. 
The idea of residual minimisation is to perceive the Dirichlet problem as the problem of minimising the \emph{residual energy}
    \[ \mathcal{L}\colon \Theta\to\mathbb R, \quad \theta\mapsto \lVert  \Delta u_\theta + f \rVert_{L^2(\Omega)}^2 + \tau \lVert u_\theta - g \rVert_{L^2(\partial\Omega)}^2 \]
for some \(\tau\in(0, \infty)\). 

As \(\mathcal{L}(\theta)\) measures the deviation of \(-\Delta u_\theta\) of the right hand side \(f\) and of the boundary values, it is a natural approach to minimise \(\mathcal{L}\) over the parameter space \(\Theta\). In order to give a theoretical justification for this approach, one can exploit elliptic regularity theory in order to show the a posteriori estimate
\begin{equation}\label{eq:EstGoal}
     \lVert u_\theta - u_f \rVert_{H^s(\Omega)}^2 \le c \cdot \mathcal{L}(\theta). 
\end{equation}
If such an estimate holds, then successful minimisation of \(\mathcal{L}\) implies approximation of the solution \(u_f\) in some norm. Note that an estimate of the form \eqref{eq:EstGoal} provides an a posteriori error estimate which is readily accessible throughout the training process.
In particular, if \(\theta^\ast\in \Theta\) minimises \(\mathcal{L}\) over \(\Theta\), then
\begin{equation}\label{eq:smallest_error_loss} 
    \lVert u_{\theta^\ast} - u_f \rVert_{H^s(\Omega)}^2 
    \le
    c \cdot \inf_{\theta\in \Theta} \mathcal{L}(\theta) 
    \le
    \tilde c \cdot \inf_{\theta\in \Theta} \lVert u_\theta - u_f \rVert_{H^2(\Omega)}^2. 
\end{equation}
In particular, approximation capabilities in \(H^2\) in combination with successful training imply error estimates in \(H^s\). This directly yields that only \(s\le2\) are potentially achievable in the a posteriori estimate~\eqref{eq:EstGoal}.
For \(s\le 1/2\) 
an according estimate has been established by \cite{shin2020error}, i.e., \(H^2\) approximation and successful training implies convergence in \(H^{1/2}\). In this sense, the approach of residual minimisation leads to a loss in regularity of \(3/2\).
It is the purpose of these notes to show that it is not possible to obtain error estimates for \(s>1/2\) in general and that 
for function classes with exact boundary values, i.e. \(u_\theta\in H^2(\Omega)\cap H^1_g(\Omega)\)\footnote{here, \(H^1_g(\Omega)\) denotes the affine subspace of \(H^1(\Omega)\) of functions that agree with \(g\) on \(\partial\Omega\)} the estimates can be improved to hold for all \(s\le2\). Hence, in this case the loss of regularity can be mitigated which shows a theoretical advantage of ansatz classes which exactly satisfy the boundary conditions.  Our proofs rely on curvature based estimates.
We discuss how measuring the residual in Sobolev norms implies estimates in higher order Sobolev norms and generalise our findings to a class of parabolic evolution equations. 

In both the stationary and instationary case our estimates are with respect to stronger norms than for the case of penalised boundary values
, see \cite{shin2020error, mishra2022estimates}. 
More precisely, Theorem~\ref{thm:MainTheorem2} shows that $H^2$ approximation yields estimates in $H^2$ for exact boundary values, where it is only able to provide $H^{1/2}$ estimates for penalised boundary values.
Where we work with the population loss, those works also include the quadrature of the appearing integrals. This routine work can be transferred to our analysis without problems.

\paragraph{Exact Dirichlet Boundary Conditions for Neural Networks}
Standard neural network architectures are usually designed for unconstrained optimization, e.g. it is hardly possible to encode boundary values into a standard neural network architecture. However, following \cite{lagaris1998artificial}, we can transform any unconstrained neural network architecture into an ansatz space with the desired boundary conditions. Assume we want to solve the Poisson problem \eqref{eq:Dir} on $\Omega$ with boundary values $g$. We then construct a smooth function $L\colon\Omega\to[0,\infty)$ that satisfies $L|_{\partial\Omega} = 0$ and $L|_{\Omega}\neq0$. The function $L$ is often referred to as a smooth approximation of the distance function to $\partial\Omega$. Furthermore, we denote by $G\in H^2(\Omega)$ a lift of the boundary conditions to all of $\Omega$, i.e. $G$ satisfies $G|_{\partial\Omega}=g$. For any neural network family $\{u_\theta \mid \theta\in\Theta\} \subseteq H^2(\Omega)$ we then consider the associated family
\begin{equation}\label{eq:ConstructionExactBC}
    \left\{ L\cdot u_\theta + G \mid \theta \in \Theta \right\} \subseteq H^2(\Omega)\cap H^1_g(\Omega)
\end{equation}
and use these functions to approximate the solution of \eqref{eq:Dir}. For complex domains it is difficult to obtain $L$ and $G$ analytically and thus the approximation via neural networks was proposed by \cite{berg2018unified}. For time dependent problems, a similar construction to \eqref{eq:ConstructionExactBC} using a smoothed distance function to the parabolic boundary of the space-time domain can be used. We refer the reader to \cite{lyu2020enforcing} for an explicit example. 

Using ansatz functions with exact boundary values has become increasingly popular as it has been observed to simplify the training process and produce more accurate solutions, see for instance \cite{berg2018unified, roth2021neural, lyu2020enforcing, chen2020comparison}. \mz The works of  \cite{chen2020comparison, courte2021robin} explicitly compare penalized boundary conditions to exactly enforced ones in numerical studies and found improved accuracy and a faster training process. This is in accordance with \cite{krishnapriyan2021characterizing} that illustrates the difficulties in the training process stemming from soft penalties in residual minimisation. \eee It is also possible to encode Neumann or Robin boundary conditions in a similar way, we refer the reader to \cite{lyu2020enforcing}. However, we mention that the approximation capabilities of such ansatz classes have not been studied so far.

\subsection{Organisation}
We state our main results in Section \ref{sec:ImportanceOfExactBV}, where we begin with the improved mode of convergence in the presence of exact boundary conditions in Section \ref{sec:H2Estimates}. We discuss the failure of \(H^2\) estimates with inexact boundary conditions in Section \ref{sec:Failure} and the implication of stronger penalisation of the residual in Section \ref{sec:HigherOrderResidual}. We generalise our results to parabolic equations in Section \ref{sec:Parabolic}.

\subsection{Notation}
On an open subset $\Omega$ of $\mathbb{R}^d$ with boundary $\partial\Omega$, we denote the Lebesgue spaces with integrability order $p\in[1,\infty]$ by $L^p(\Omega)$ and $L^p(\partial\Omega)$ respectively. The Sobolev spaces with order of integrability $p\in[1,\infty]$ and smoothness parameter $s\in\mathbb R$ are denoted by $W^{s,p}(\Omega)$ and $W^{s,p}(\partial\Omega)$. If $p=2$ we instead write $H^s(\Omega)$ and $H^s(\partial\Omega)$. The Sobolev space with zero boundary values in the trace sense is denoted by $W^{1,p}_0(\Omega)$ or $H^1_0(\Omega)$ respectively. The reader is referred to \cite{grisvard2011elliptic} for the precise definitions and more information on Sobolev spaces. Furthermore, for a Banach space $V$, we denote by $L^2(I,V)$ the Bochner space of Bochner measurable, square integrable functions defined on the time interval $I=[0,T]$ taking values in $V$. For the Sobolev space modelled on $L^2(I,V)$ we write $H^1(I,V)$ and refer the reader to \cite{boyer2012mathematical}.

For multivariate, scalar valued functions, we use the symbol $\Delta$ for the Laplace operator, $\nabla$ for the gradient and $D^2$ for the second derivative, i.e. the Hessian. By $d_t$ we denote a time derivative, usually in the context of a vector-valued Sobolev space.

For the purpose of this work, it suffices to view neural networks as parametric families of functions. More precisely, let $\Theta \subseteq \mathbb{R}^N$ be a parameter set and $V$ a (subset of a) function space. Suppose there is a map $\Theta\to V$ denoted by $\theta \mapsto u_\theta$. We then say $u_\theta$ is the (neural network) function represented by the parameters $\theta$. Note that the usual neural network architectures fall in this category, however, this setting also includes general parametric function classes. We stick to the terminology of neural networks since their recent application is the main motivation of this work.

\section{Implications  
of Exact Boundary Values in Residual Minimisation}\label{sec:ImportanceOfExactBV}
In this section we show the theoretical benefits of using neural network type ansatz functions that satisfy Dirichlet boundary conditions exactly in the residual minimisation method for the Poisson problem. We see that the exact boundary conditions improve the mode of convergence from $H^{1/2}$ to $H^2$. Although being formulated for the Laplace operator, those results hold for any elliptic operator which is \(H^2\) regular.

\subsection{\(H^2\) estimates for Residual Minimisation with Exact Boundary Values}\label{sec:H2Estimates}

We start by considering the case of exact boundary conditions and present two main results, one that allows to quantify the $H^2$ error using the value of the loss function and the other, an estimate based on C\'ea's Lemma that allows to link the approximation capabilities of the network class to the error made by residual minimisation.
\begin{setting}\label{setting:laplace}
     We consider again \eqref{eq:Dir}, in particular, we assume that the problem is \(H^2\) regular meaning that there is a constant $C_{\textrm{reg}}>0$, satisfying 
        \[ \lVert u \rVert_{H^2(\Omega)} \leq C_{\textrm{reg}} \lVert \Delta u \rVert_{L^2(\Omega)}\quad \text{for all } u\in H^2(\Omega)\cap H^1_0(\Omega). \]
        Furthermore, we assume that $\Theta$ is a parameter set of a neural network type ansatz class, such that for every $\theta \in \Theta$ we have $u_\theta \in H^2(\Omega)$ and $(u_\theta)|_{\partial\Omega} = g$. As our strategy is to minimise the residual we define the loss function
     \[
        \mathcal{L}\colon\Theta\to\mathbb{R}, \quad \mathcal{L}(\theta) = \lVert \Delta u_\theta + f \rVert^2_{L^2(\Omega)}.
     \]
\end{setting}

\begin{remark}\label{remark:regularity}
    Setting \ref{setting:laplace} is for example satisfied when $\partial\Omega \in C^{1,1}$, $f\in L^2(\Omega)$. 
    Alternatively, one can replace the assumption $\partial\Omega \in C^{1,1}$ by requiring that the domain $\Omega$ is convex. We refer to \cite{grisvard2011elliptic} for a detailed discussion of the regularity properties of elliptic equations.
\end{remark}

The following result is a trivial corollary of the $H^2$ regularity we assumed and a similar result
is due to \cite{van2020optimally}, although not exploiting the benefits of exact boundary conditions. Albeit being of simple nature, we believe it can be of practical relevance due to its easy and explicit error control.
\begin{theorem}\label{thm:MainTheorem1}
    Assume we are in the situation of Setting \ref{setting:laplace}, then it holds for every $\theta\in\Theta$ that 
    \[ \lVert u_\theta - u_f \rVert_{H^2(\Omega)} \leq C_{\textrm{reg}}\sqrt{\mathcal{L}(\theta)}. \]
    For convex domains, we may estimate the regularity constant explicitely. It holds
    \begin{equation*}
        C_{\textrm{reg}} \leq \sqrt{1 + C_P} \leq \sqrt{ 1 + \left( \frac{|\Omega|}{\omega_d} \right)^{\frac1d} },
    \end{equation*}
    where $d$ is the dimension of $\Omega$, $\omega_d$ denotes the volume of the unit ball in $\mathbb{R}^d$ and $C_P$ is the Poincar\'e constant for functions in $H^1_0(\Omega)$.
\end{theorem}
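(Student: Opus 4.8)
The plan is to treat the theorem's two assertions separately, since the first is essentially a reformulation of the hypotheses. For the a posteriori bound I would set $w := u_\theta - u_f$ and first record that $w \in H^2(\Omega)\cap H^1_0(\Omega)$: both $u_\theta$ and $u_f$ attain the Dirichlet data $g$ on $\partial\Omega$ (the former by the standing assumption of Setting~\ref{setting:laplace}, the latter because it solves \eqref{eq:Dir}), so the difference has vanishing trace. Applying the $H^2$-regularity estimate of Setting~\ref{setting:laplace} to $w$ and using that $-\Delta u_f = f$, hence $\Delta w = \Delta u_\theta - \Delta u_f = \Delta u_\theta + f$, yields $\|w\|_{H^2(\Omega)} \le C_{\textrm{reg}}\|\Delta u_\theta + f\|_{L^2(\Omega)} = C_{\textrm{reg}}\sqrt{\mathcal{L}(\theta)}$. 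This is exactly the claimed inequality, and it is immediate once the zero-trace observation is made, consistent with the paper's description of it as a trivial corollary.

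The real content is the explicit control of $C_{\textrm{reg}}$ for convex $\Omega$, and I would route this through a curvature-based identity. The central step is the bound $\|D^2 u\|_{L^2(\Omega)} \le \|\Delta u\|_{L^2(\Omega)}$ for all $u \in H^2(\Omega)\cap H^1_0(\Omega)$ when $\Omega$ is convex. To prove it I would argue first for smooth $u$ and integrate by parts twice in $\int_\Omega (\Delta u)^2\,dx$, rewriting it as $\int_\Omega |D^2 u|^2\,dx$ plus a boundary integral; since the tangential gradient of $u$ vanishes on $\partial\Omega$, this boundary term is governed by the second fundamental form of $\partial\Omega$ and is nonnegative precisely because $\Omega$ is convex. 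This is the Rellich-type identity found in \cite{grisvard2011elliptic}, and a density argument then extends the inequality to $H^2(\Omega)\cap H^1_0(\Omega)$. I expect this curvature computation to be the main obstacle: it requires the correct boundary identity and a careful sign analysis, whereas the remaining steps are elementary.

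With the Hessian bound available, I would dispatch the lower-order contribution to the $H^2$ norm cheaply. Integration by parts gives $\|\nabla u\|_{L^2(\Omega)}^2 = -\int_\Omega u\,\Delta u\,dx \le \|u\|_{L^2(\Omega)}\|\Delta u\|_{L^2(\Omega)}$, and inserting the Poincaré inequality $\|u\|_{L^2(\Omega)} \le \sqrt{C_P}\,\|\nabla u\|_{L^2(\Omega)}$ produces $\|\nabla u\|_{L^2(\Omega)} \le \sqrt{C_P}\,\|\Delta u\|_{L^2(\Omega)}$. Summing the contribution $C_P$ of the gradient term with the contribution $1$ of the Hessian term then gives $\|u\|_{H^2(\Omega)}^2 \le (1 + C_P)\|\Delta u\|_{L^2(\Omega)}^2$, that is, $C_{\textrm{reg}} \le \sqrt{1 + C_P}$.

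Finally, for the explicit bound on $C_P$ I would exploit that the Poincaré constant is the inverse first Dirichlet eigenvalue of $\Omega$ and compare with the ball $B^\ast$ of equal volume, whose radius is $(|\Omega|/\omega_d)^{1/d}$. Schwarz symmetrisation, via the Pólya--Szegő inequality together with the Faber--Krahn inequality, shows that the first eigenvalue of $\Omega$ is at least that of $B^\ast$, which translates into the asserted bound on $C_P$. This step is standard and presents no difficulty beyond keeping track of the normalisation convention for the Poincaré constant.
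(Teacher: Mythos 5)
Your treatment of the first inequality is exactly the paper's proof (difference in $H^2(\Omega)\cap H^1_0(\Omega)$, apply the assumed regularity estimate), and your Rellich-identity argument for $\lVert D^2 u\rVert_{L^2(\Omega)}\le\lVert\Delta u\rVert_{L^2(\Omega)}$ on convex domains is precisely the Grisvard result the paper cites without reproving it (your inequality is in fact the correct statement; the paper's displayed equality is loose). The genuine gap is in the constant bookkeeping for the lower-order terms. The $H^2(\Omega)$ norm carries three contributions, $\lVert u\rVert_{L^2(\Omega)}^2+\lVert\nabla u\rVert_{L^2(\Omega)}^2+\lVert D^2u\rVert_{L^2(\Omega)}^2$, and your final summation accounts only for the gradient ($C_P$) and the Hessian ($1$), silently dropping the zeroth-order term. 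Your own steps bound it by $\lVert u\rVert_{L^2(\Omega)}^2\le C_P\lVert\nabla u\rVert_{L^2(\Omega)}^2\le C_P^2\lVert\Delta u\rVert_{L^2(\Omega)}^2$, so what the written argument actually establishes is $C_{\textrm{reg}}\le\sqrt{1+C_P+C_P^2}$, not $\sqrt{1+C_P}$. The paper instead estimates $\lVert u\rVert_{L^2(\Omega)}^2+\lVert\nabla u\rVert_{L^2(\Omega)}^2$ \emph{jointly} via the Lax--Milgram a priori estimate, using that the coercivity constant of the Dirichlet bilinear form is tied to the Poincar\'e constant; to reach the stated constant you need that joint estimate (or some absorption argument), not a term-by-term sum.

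The symmetrisation step has a second, related problem: a homogeneity mismatch. You identify $C_P$ with $\lambda_1(\Omega)^{-1}$, consistent with your squared convention $\lVert u\rVert_{L^2(\Omega)}^2\le C_P\lVert\nabla u\rVert_{L^2(\Omega)}^2$, but then Faber--Krahn yields $C_P\le\bigl(|\Omega|/\omega_d\bigr)^{2/d}/j_{d/2-1,1}^2$ with $j_{d/2-1,1}$ the first Bessel zero --- a quantity scaling like length squared --- and this does \emph{not} imply $C_P\le(|\Omega|/\omega_d)^{1/d}$: already for a large ball of radius $R$ one has $\lambda_1^{-1}=R^2/j_{d/2-1,1}^2\gg R$, so under your convention the asserted bound is false, not merely unproved. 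The bound in the theorem is the elementary constant from Jost for the \emph{unsquared} inequality $\lVert u\rVert_{L^2(\Omega)}\le(|\Omega|/\omega_d)^{1/d}\lVert\nabla u\rVert_{L^2(\Omega)}$, which is what the paper cites directly; your eigenvalue-comparison route does work under that normalisation (and even gains a factor $j_{d/2-1,1}^{-1}$), but then your third paragraph must be redone with $C_P$ in place of $\sqrt{C_P}$, changing all powers of $C_P$. As written, the two halves of your argument use incompatible normalisations of the Poincar\'e constant, and neither delivers the claimed $\sqrt{1+C_P}$.
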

\begin{proof}
    The difference $u_\theta - u_f$ lies in $H^2(\Omega)\cap H^1_0(\Omega)$ and solves $-\Delta(u_f-u_\theta) = \Delta u_\theta +f$. The $H^2(\Omega)$ regularity theory then implies the desired estimate. Let us now assume that $\Omega$ is convex and derive the explicit estimate on $C_{\textrm{reg}}$. We expand the $H^2(\Omega)$ norm of $u_f \in H^2(\Omega)\cap H^1_0(\Omega)$
    \[ \lVert u_f \rVert^2_{H^2(\Omega)} = \lVert u_f \rVert^2_{L^2(\Omega)} + \lVert \nabla u_f \rVert^2_{L^2(\Omega)} + \lVert D^2u_f \rVert^2_{L^2(\Omega)} \]
    Due to the zero boundary values and the convexity of $\Omega$ we have
    \[ \lVert D^2u \rVert^2_{L^2(\Omega)} = \lVert \Delta u \rVert^2_{L^2(\Omega)} = \lVert f \rVert^2_{L^2(\Omega)} \]
    and we refer the reader to \cite{grisvard2011elliptic} for details. The first two terms can be estimated jointly using the a priori estimates of the Lax-Milgram Theorem, this yields
    \[ \lVert u \rVert^2_{L^2(\Omega)} + \lVert \nabla u \rVert^2_{L^2(\Omega)} \leq C_P^2\lVert f \rVert^2_{H^1_0(\Omega)^*} \leq C_P^2\lVert f \rVert^2_{L^2(\Omega)}. \]
    This is due to the fact that $C_P^{-1}$ is the coercivity constant of the Dirichlet Laplacian bilinear form, see \cite{evans1998partial}. The explicit estimate of the Poincar\'e constant $C_P$ can be found in \cite{jostpartial}.
\end{proof}
\begin{remark}
Some remarks are in order.
\begin{itemize}
    \item [(i)] The zero boundary conditions are essential. If one instead resorts to a $L^2(\partial\Omega)$ penalty of the boundary values the best convergence one can hope for is $H^{1/2}(\Omega)$. We elaborate this in Section \ref{sec:Failure}.
    \item[(ii)] The theorem allows to compute an explicit upper bound on the error made by residual minimisation, once the training returns a parameter \(\theta\) via computing the (continuous) loss. In particular, no access to the solution $u_f$ is required. This means that if boundary conditions are encoded in the ansatz functions, the loss itself is a consistent a posteriori error estimator for the residual minimisation method.
    \item[(iii)] The root in the estimate above does not indicate a slow convergence. In fact, the loss itself is a squared $L^2(\Omega)$ norm and the root accounts for that.
\end{itemize}
\end{remark}
The next theorem allows to quantify the error made by the residual minimisation method using the optimization quality and the expressiveness of the ansatz class. It is an application of the non-linear C\'ea Lemma as formulated by \cite{muller2021error}.
\begin{theorem}\label{thm:MainTheorem2}
Assume we are in Setting \ref{setting:laplace}, then for any $\theta\in\Theta$ it holds
\begin{equation}
    \lVert u_f - u_\theta \rVert_{H^2(\Omega)} \leq \sqrt{ C^2_{\textrm{reg}}\delta + C^2_{\textrm{reg}}\inf_{\tilde{\theta}\in\Theta }\lVert \Delta(u_{\tilde \theta} - u_f) \rVert^2_{L^2(\Omega)} } \le \sqrt{ C^2_{\textrm{reg}}\delta + C^2_{\textrm{reg}}\inf_{\tilde{\theta}\in\Theta }\lVert u_{\tilde \theta} - u_f \rVert^2_{H^2(\Omega)} },
\end{equation}
where $\delta = \mathcal{L}(\theta) - \inf_{\tilde \theta \in \Theta}\mathcal{L}(\tilde\theta)$.
\end{theorem}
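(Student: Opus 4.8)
The plan is to recognise the statement as a direct instance of the non-linear C\'ea lemma of \cite{muller2021error}, whose two ingredients are a coercivity bound (the error is controlled by the loss) and a continuity bound (the loss is controlled by the error). The coercivity half is already in hand: Theorem~\ref{thm:MainTheorem1} gives, for every $\theta\in\Theta$, the estimate $\lVert u_\theta - u_f \rVert_{H^2(\Omega)}^2 \le C_{\textrm{reg}}^2\,\mathcal{L}(\theta)$. The proof then amounts to inserting the definition of the optimisation gap $\delta$ into this bound and supplying the matching continuity estimate, which is the only place where anything beyond Theorem~\ref{thm:MainTheorem1} is needed.

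For the first inequality I would argue as follows. Since $-\Delta u_f = f$, for every competitor $\tilde\theta\in\Theta$ the loss can be rewritten purely in terms of the error, $\mathcal{L}(\tilde\theta) = \lVert \Delta u_{\tilde\theta} + f \rVert_{L^2(\Omega)}^2 = \lVert \Delta(u_{\tilde\theta} - u_f) \rVert_{L^2(\Omega)}^2$. By the definition $\delta = \mathcal{L}(\theta) - \inf_{\tilde\theta\in\Theta}\mathcal{L}(\tilde\theta)$ one has $\mathcal{L}(\theta) = \delta + \inf_{\tilde\theta\in\Theta}\mathcal{L}(\tilde\theta)$. Substituting both relations into the coercivity bound of Theorem~\ref{thm:MainTheorem1} gives $\lVert u_f - u_\theta \rVert_{H^2(\Omega)}^2 \le C_{\textrm{reg}}^2\delta + C_{\textrm{reg}}^2\inf_{\tilde\theta\in\Theta}\lVert \Delta(u_{\tilde\theta}-u_f)\rVert_{L^2(\Omega)}^2$, and taking square roots yields the first inequality. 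This part is pure bookkeeping.

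For the second inequality it suffices to bound, for each competitor $\tilde\theta$, the residual $\lVert \Delta(u_{\tilde\theta}-u_f) \rVert_{L^2(\Omega)}$ by $\lVert u_{\tilde\theta}-u_f \rVert_{H^2(\Omega)}$ and to pass to the infimum. Writing $v = u_{\tilde\theta}-u_f$, this is the continuity of the Laplacian as a map $H^2(\Omega)\to L^2(\Omega)$: from $\Delta v=\operatorname{tr}(D^2v)$ and a pointwise Cauchy--Schwarz bound on the trace one obtains a control of $\lVert\Delta v\rVert_{L^2(\Omega)}$ in terms of $\lVert D^2v\rVert_{L^2(\Omega)}$, and the latter is dominated by $\lVert v\rVert_{H^2(\Omega)}$.

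I expect this continuity estimate to be the only genuine analytic ingredient beyond Theorem~\ref{thm:MainTheorem1}, and also the delicate point of the proof. The pointwise comparison of the trace with the Frobenius norm of the Hessian is not an equality, so one has to decide whether the bound really holds with the displayed constant or only up to a dimensional factor, and whether the exact-boundary-value structure $v\in H^1_0(\Omega)$ is needed to control it. Everything else reduces to the algebraic rearrangement through $\delta$ and the substitution $f=-\Delta u_f$, so the argument is short once Theorem~\ref{thm:MainTheorem1} is granted.
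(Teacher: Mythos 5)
Your proof of the first inequality is correct and, in substance, the same argument as the paper's, only more economical. The paper routes it through the non-linear C\'ea lemma of \cite{muller2021error}: it introduces a harmonic lift $u_g$ with $-\Delta u_g=0$, $(u_g)|_{\partial\Omega}=g$ to reduce the affine class $H^2(\Omega)\cap H^1_g(\Omega)$ to the linear subspace $H^2(\Omega)\cap H^1_0(\Omega)$, identifies the coercivity constant $\alpha = 2/C^2_{\textrm{reg}}$ of the bilinear form $a(u,v)=2\int_\Omega \Delta u\,\Delta v\,\mathrm dx$, and then invokes the abstract lemma. For a quadratic energy whose exact minimiser is $u_f$, that lemma reduces to precisely your identity $\mathcal L(\tilde\theta)=\lVert\Delta(u_{\tilde\theta}-u_f)\rVert^2_{L^2(\Omega)}$ combined with the coercivity bound of Theorem~\ref{thm:MainTheorem1}; your version avoids both the lift and the abstract machinery, which is a legitimate simplification.

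The delicate point you flag in the second inequality is real, and your proposed argument does not close it --- nor, for that matter, does the paper, whose proof simply asserts this step. Pointwise Cauchy--Schwarz on the trace gives only $\lVert\Delta v\rVert_{L^2(\Omega)}\le\sqrt d\,\lVert D^2v\rVert_{L^2(\Omega)}$, and the dimensional factor cannot be removed: take $v(x)=1-|x|^2$ on the unit disc in $\mathbb R^2$, which lies in $H^2(\Omega)\cap H^1_0(\Omega)$; then $\lVert\Delta v\rVert^2_{L^2(\Omega)}=16\pi$ while $\lVert v\rVert^2_{H^2(\Omega)}=8\pi+2\pi+\pi/3=31\pi/3<16\pi$. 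Moreover, the exact-boundary-value structure works \emph{against} you here, not for you: integrating by parts twice yields, for $v\in H^2(\Omega)\cap H^1_0(\Omega)$ on a convex domain, $\lVert\Delta v\rVert^2_{L^2(\Omega)}=\lVert D^2v\rVert^2_{L^2(\Omega)}+\int_{\partial\Omega}\mathcal H\,(\partial_n v)^2\,\mathrm d\sigma\ \ge\ \lVert D^2v\rVert^2_{L^2(\Omega)}$, where $\mathcal H\ge0$ is the sum of the principal curvatures --- this is exactly the Grisvard identity underlying the $H^2$ regularity assumed in Setting~\ref{setting:laplace}, and it runs in the wrong direction. Consequently the second inequality of the theorem as displayed is false in general: a one-element ansatz class with $u_{\theta_0}=u_f+(1-|x|^2)$ on the unit disc (so $\delta=0$ and the boundary values are exact) violates it. What is true, and what you can salvage from your Cauchy--Schwarz step, is the bound with a dimensional constant, $\lVert u_f-u_\theta\rVert_{H^2(\Omega)}\le\sqrt{C^2_{\textrm{reg}}\delta + d\,C^2_{\textrm{reg}}\inf_{\tilde\theta\in\Theta}\lVert u_{\tilde\theta}-u_f\rVert^2_{H^2(\Omega)}}$. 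Your first inequality is the sharper, fully proved statement; either stop there, or record the second one with the factor $d$ made explicit.
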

\begin{proof}
We define the energy
\[ E\colon H^2(\Omega) \to \mathbb{R},\quad E(u) = \lVert \Delta u + f \rVert^2_{L^2(\Omega)}.\]
Note that $E$ is defined on a different domain than the loss function $\mathcal{L}$, which is why we reserve an own symbol for it. The energy $E$ is a quadratic energy
\begin{align*} 
    \lVert \Delta u + f \rVert^2_{L^2(\Omega)} &= \int_\Omega (\Delta u)^2\mathrm dx + 2\int_\Omega f \Delta u  \mathrm dx + \int_\Omega f^2 \mathrm dx
    \\
    &= \frac{1}{2}a(u,u) - F(u) + c,
\end{align*}
where the bilinear form $a:H^2(\Omega)\times H^2(\Omega)\to\mathbb{R}$, the functional $F\in H^2(\Omega)^*$ and the constant $c$ are given by
\begin{gather*}
a(u,v) = 2\int_\Omega \Delta u \Delta v \mathrm dx, \quad F(u) = 2\int_\Omega f (-\Delta u)  \mathrm dx, \quad c = \int_\Omega f^2 \mathrm dx.
\end{gather*}
The unique minimiser of $E$ in the affine subspace $H^2(\Omega)\cap H^1_g(\Omega)$ is precisely the solution $u_f$ to the Poisson problem \eqref{eq:Dir}. The bilinear form $a$ is coercive on the subspace $H^2(\Omega)\cap H^1_0(\Omega)$, which follows from elliptic regularity theory, see for instance \cite{grisvard2011elliptic}. This allows to exploit a C\'ea Lemma for non-linear ansatz spaces, as described in Proposition 3.1 by \cite{muller2021error}. To transfer this to the affine space $H^2(\Omega)\cap H^1_g(\Omega)$ we choose $u_g \in H^2(\Omega)$ such that $-\Delta u_g = 0$ and $(u_g)|_{\partial\Omega} = g$. For an arbitrary $u_\theta$ we then expand
\begin{equation*}
    \lVert u_\theta - u_f \rVert_{H^2(\Omega)} = \lVert (u_\theta - u_g) - (u_f - u_g) \rVert_{H^2(\Omega)}. 
\end{equation*}
Now note that $u_f - u_g$ solves $-\Delta (u_f - u_g) = f$ with zero boundary values, hence $u_f - u_g$ is the unique minimiser of $E$ over the subspace $H^2(\Omega)\cap H^1_0(\Omega)$ and we can apply C\'ea's Lemma with the ansatz set $\{u_\theta - u_g\mid \theta \in \Theta \}$
\begin{align*}
    \lVert u_\theta - u_f \rVert_{H^2(\Omega)} \leq \sqrt{\frac{2\delta}{\alpha} +\frac1\alpha \inf_{\tilde{\theta}\in\Theta } \lVert u_{\tilde\theta} - u_f\rVert^2_a  },
\end{align*}
where \(\lVert \cdot\rVert_a\) denotes the norm induced by \(a\). Using that the coercivity constant $\alpha$ of $a$ is $2/C^2_{\textrm{reg}}$ and the norm $\lVert \cdot \rVert_a = 2\lVert \Delta \cdot\rVert_{L^2(\Omega)}$ we conclude.
\end{proof}

\begin{remark}[General Elliptic Equations]\label{remark:general_elliptic_equations}
    The discussion of this chapter can be extended to more general elliptic equations. For coefficients $A \in C^{0,1}(\Omega, \mathbb{R}^{d\times d})$, a right-hand side $f\in L^2(\Omega)$ and boundary values $g\in H^{3/2}(\partial\Omega)$ consider the equation
    \begin{equation*}
    \begin{split}
        -\operatorname{div}\left(A \nabla u \right)  & = f \quad \text{in } \Omega, \\
    u & = g \quad \text{on } \partial\Omega.
    \end{split}
\end{equation*}
    If we assume that $\partial\Omega \in C^{1,1}$ (or that $\Omega$ is convex) and the coefficients are uniformly elliptic, i.e., for a constant $c_A >0$ satisfy $A(x)\xi\cdot\xi \geq c_A|\xi|^2$ uniformly in $x\in\Omega$ and $\xi\in\mathbb{R}^d$, the problem admits a unique solution $u_f \in H^{2}(\Omega)$ and we can estimate
    \begin{equation*}
        \lVert u_f \rVert_{H^2(\Omega)} \leq c_{\text{reg}} \left( \lVert f \rVert_{L^2(\Omega)} + \lVert g \rVert_{H^{3/2}(\partial\Omega)} \right).
    \end{equation*}
    Arguing as in the proof of Theorem~\ref{thm:MainTheorem1} we obtain 
    \begin{equation*}
        \lVert u_\theta - u_f \rVert_{H^2(\Omega)} \leq c_{\text{reg}} \sqrt{\mathcal{L}(\theta)}.
    \end{equation*}
    Similarly, Theorem~\ref{thm:MainTheorem2} can be transferred to this setting.
\end{remark}

\subsection{Failure without Exact Boundary Values}\label{sec:Failure}
In this section we show that \emph{not} enforcing exact boundary values in the neural network ansatz functions leads to considerably weaker error estimates. Throughout this subsection, we work under the following assumptions.
\begin{setting}\label{set:failure}
     We consider again \eqref{eq:Dir}.
        We assume that $\Theta$ is a parameter set of a neural network type ansatz class, such that for every $\theta \in \Theta$ we have $u_\theta \in H^2(\Omega)$, but make no assumptions on its boundary values. As our strategy is to minimise the residual we define the loss function with boundary penalty
     \[
        \mathcal{L}_\tau\colon\Theta\to\mathbb{R}, \quad \mathcal{L}_\tau(\theta) = \lVert \Delta u_\theta + f \rVert^2_{L^2(\Omega)} + \tau \lVert u_\theta - g \rVert_{L^2(\partial\Omega)}^2,
     \]
     where $\tau\in(0, \infty)$ is a positive penalization parameter.
\end{setting}

\jm{Without exact boundary values, the penalization of the deviations of the boundary values is required in order to enforce them approximately. Note that if $u_\theta$ has exact boundary values, it holds that $\mathcal L_\tau(\theta) = \mathcal L(\theta)$. With the penalization introduced above, we obtain a similar result to Theorem~\ref{thm:MainTheorem1} but only with respect to the weaker $H^{1/2}$-norm, which is to the estimate by~\cite{shin2020error} is sharp. However, we sharpen this result by showing that $1/2$ is the largest exponent for which such an estimate can hold in general.
}

\begin{theorem}
Assume that we are in Setting~\ref{set:failure} and that the domain $\Omega\subseteq\mathbb R^d$ has a smooth boundary $\partial\Omega\in C^\infty$. Then for $s\in\mathbb R$ there is a constant $c>0$ such that 
\begin{equation}\label{eq:abstractEstimate}
    \lVert u_\theta - u_f \rVert_{H^s(\Omega)} \le c \sqrt{\mathcal L_\tau(\theta)} \quad \text{for all } \theta\in\Theta
\end{equation}
and all parametric classes and data $f\in L^2(\Omega), g\in H^{3/2}(\partial\Omega)$ if and only if $s\le 1/2$.
\end{theorem}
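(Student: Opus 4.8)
The plan is to strip the statement down to a sharp stability estimate for the inhomogeneous Dirichlet problem, prove that estimate for $s\le 1/2$, and then saturate its failure for $s>1/2$ by a boundary-layer construction. First I would pass to the error $w \coloneqq u_\theta - u_f \in H^2(\Omega)$. Since $-\Delta u_f = f$, the function $w$ solves
\[
    -\Delta w = -(\Delta u_\theta + f) =: r \quad \text{in } \Omega, \qquad w = u_\theta - g =: h \quad \text{on } \partial\Omega,
\]
and by construction $\lVert r\rVert_{L^2(\Omega)}^2 + \tau\lVert h\rVert_{L^2(\partial\Omega)}^2 = \mathcal L_\tau(\theta)$, so that $\lVert r\rVert_{L^2(\Omega)}$ and $\lVert h\rVert_{L^2(\partial\Omega)}$ are each bounded by $\max\{1,\tau^{-1/2}\}\sqrt{\mathcal L_\tau(\theta)}$. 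Thus \eqref{eq:abstractEstimate} amounts to the analytic assertion that $\lVert w\rVert_{H^s(\Omega)} \le c'(\lVert \Delta w\rVert_{L^2(\Omega)} + \lVert \mathrm{tr}\, w\rVert_{L^2(\partial\Omega)})$ for all $w\in H^2(\Omega)$; for the ``if'' part I seek such a bound with $c'$ independent of data and ansatz class, while for the ``only if'' part it suffices to refute the existence of any finite constant for a single class and fixed data.

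For the sufficiency direction $s\le 1/2$ I would split $w = w_0 + \mathcal E h$, where $w_0 \in H^2(\Omega)\cap H^1_0(\Omega)$ solves $-\Delta w_0 = -\Delta w$ with zero trace and $\mathcal E h$ denotes the harmonic extension of $h$. The first piece is governed by exactly the $H^2$ regularity assumed in Setting~\ref{setting:laplace}, giving $\lVert w_0\rVert_{H^s(\Omega)} \le \lVert w_0\rVert_{H^2(\Omega)} \le C_{\textrm{reg}}\lVert \Delta w\rVert_{L^2(\Omega)}$ since $s\le 2$. The second piece is handled by the boundedness of the harmonic extension $\mathcal E\colon H^{s-1/2}(\partial\Omega)\to H^s(\Omega)$, so that $\lVert \mathcal E h\rVert_{H^s(\Omega)} \le c\lVert h\rVert_{H^{s-1/2}(\partial\Omega)} \le c\lVert h\rVert_{L^2(\partial\Omega)}$. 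The decisive point is the last embedding: it is valid precisely because $s-1/2\le 0$ places $L^2(\partial\Omega)=H^0(\partial\Omega)$ continuously inside $H^{s-1/2}(\partial\Omega)$, and $s=1/2$ is the exact threshold at which boundary data controlled only in $L^2$ still suffices. Summing the two contributions and translating back through the reduction yields \eqref{eq:abstractEstimate} for every $s\le 1/2$.

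For the necessity direction $s>1/2$ I would keep the given $f,g$ and build an ansatz family on which the loss stays bounded while the $H^s$ error diverges. Let $\psi_n$ be $L^2(\partial\Omega)$-normalised eigenfunctions of the Laplace--Beltrami operator $-\Delta_{\partial\Omega}$ with eigenvalues $\mu_n\to\infty$, and let $w_n = \mathcal E\psi_n$ be their harmonic extensions; since $\psi_n\in C^\infty(\partial\Omega)$ these are smooth up to the boundary, so $w_n\in H^2(\Omega)\subseteq H^s(\Omega)$ and the family $u_{\theta_n} \coloneqq u_f + w_n$ is admissible in Setting~\ref{set:failure}. Harmonicity makes the interior residual vanish, $\Delta u_{\theta_n}+f = \Delta w_n = 0$, while on the boundary $u_{\theta_n}-g = w_n|_{\partial\Omega} = \psi_n$, so that $\mathcal L_\tau(\theta_n) = \tau\lVert\psi_n\rVert_{L^2(\partial\Omega)}^2 = \tau$ is constant in $n$. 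On the other hand, the trace theorem for $s>1/2$ supplies a constant $C$ with
\[
    \lVert u_{\theta_n}-u_f\rVert_{H^s(\Omega)} = \lVert w_n\rVert_{H^s(\Omega)} \ge C^{-1}\lVert \mathrm{tr}\, w_n\rVert_{H^{s-1/2}(\partial\Omega)} = C^{-1}\lVert \psi_n\rVert_{H^{s-1/2}(\partial\Omega)},
\]
and the right-hand side diverges as $n\to\infty$ because $s-1/2>0$ and $\mu_n\to\infty$. Since the left-hand side of \eqref{eq:abstractEstimate} tends to infinity while its right-hand side stays equal to $\sqrt{\tau}$, no finite $c$ can hold, which refutes the estimate for every $s>1/2$.

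I expect the necessity direction to be the main obstacle: it rests on producing boundary data bounded in $L^2(\partial\Omega)$ but unbounded in $H^{s-1/2}(\partial\Omega)$, together with a matching trace lower bound on the $H^s(\Omega)$ norm of its harmonic extension. This is the analytic imprint of the fact that the harmonic extension of a frequency-$\mu_n^{1/2}$ boundary mode concentrates in a boundary layer of width $\mu_n^{-1/2}$; on a flat boundary one can bypass the spectral theory altogether and read everything off the explicit Poisson profile $e^{-\lvert k\rvert x_d}$, whereas on a curved smooth domain the clean route is to combine the smoothness and eigenvalue asymptotics of $-\Delta_{\partial\Omega}$ with the trace inequality as above.
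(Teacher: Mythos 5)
Your proof is correct, and it deviates from the paper's argument in both directions, most substantially in the necessity part. For sufficiency ($s\le 1/2$) the paper directly invokes the a priori estimate \eqref{eq:RegularityEstimate} of \cite{schechter1963p} (see also Lemma 6.2 of \cite{shin2020error}) at $s=1/2$; your splitting $w=w_0+\mathcal E h$ re-derives this from two ingredients, the $H^2$ regularity of the homogeneous Dirichlet problem and the boundedness of the harmonic extension $\mathcal E\colon L^2(\partial\Omega)\to H^{1/2}(\Omega)$. Note that the latter mapping property is itself a nontrivial low-regularity fact (Lions--Magenes transposition theory, i.e.\ very weak solutions of the Dirichlet problem) of essentially the same depth as the Schechter estimate, so it deserves a citation rather than being treated as routine --- this direction of your argument repackages the paper's ingredient rather than simplifying it. For necessity ($s>1/2$) the two routes genuinely differ. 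The paper argues abstractly: assuming \eqref{eq:abstractEstimate}, specialising to $f=0$, $g=0$ and testing with harmonic extensions yields $\lVert h\rVert_{H^{s-1/2}(\partial\Omega)}\le c\,\lVert h\rVert_{L^2(\partial\Omega)}$ for all $h\in H^{3/2}(\partial\Omega)$, and a density argument then forces the spaces $H^{\delta}(\partial\Omega)$, $\delta\in(0,s-1/2)$, to coincide with $L^2(\partial\Omega)$, a contradiction. You instead exhibit an explicit witness: harmonically extended Laplace--Beltrami eigenfunctions added to $u_f$ form an admissible parametric family with loss identically equal to $\tau$, while the trace theorem and the spectral characterisation of boundary Sobolev norms give $\lVert u_{\theta_n}-u_f\rVert_{H^s(\Omega)}\gtrsim (1+\mu_n)^{(s-1/2)/2}\to\infty$. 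This constructive alternative buys a quantitative rate of failure (matching your boundary-layer heuristic) and sidesteps the density step, at the price of needing eigenvalue asymptotics $\mu_n\to\infty$ and the equivalence of spectral and standard Sobolev norms on the closed manifold $\partial\Omega$, whereas the paper needs only the qualitative fact that $H^{\delta}(\partial\Omega)\neq L^2(\partial\Omega)$. A minor additional feature of your construction is that it refutes the estimate for \emph{arbitrary} fixed data $f,g$, while the paper specialises to $f=g=0$; both suffice given the quantification over data in the statement.
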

\begin{proof}
\jm{First, we show that the estimate holds for $s\le1/2$, where it suffices to show it for $s=1/2$. For this, we use the estimate 
\begin{equation}\label{eq:RegularityEstimate}
    \lVert u \rVert_{H^s(\Omega)} \leq c \left( \lVert -\Delta u \rVert_{H^{s-2}(\Omega)} + \lVert u \rVert_{H^{s-1/2}(\partial\Omega)}\right),
\end{equation}
for all $u\in C^\infty(\overline\Omega)$ and $s\in\mathbb R$, 
see 
Theorem 2.1 in~\cite{schechter1963p} or Lemma 6.2 in~\cite{shin2020error}. 
Setting $s=1/2$ and noting that it extends to functions $u\in H^2(\Omega)$ yields
\[
    \lVert u \rVert_{H^{1/2}(\Omega)}
    \leq
    c\left( \lVert \Delta u \rVert_{H^{-3/2}(\Omega)} + \lVert u \rVert_{L^{2}(\partial\Omega)} \right) 
    \leq
    \tilde c\left( \lVert \Delta u \rVert_{L^{2}(\Omega)} + \lVert u \rVert_{L^{2}(\partial\Omega)} \right).
\]
Setting $u\coloneqq u_\theta - u_f$ yields
\[ \lVert u_\theta - u_f \rVert_{H^{1/2}(\Omega)} \leq (1 + \tau^{-1/2}) \tilde c \sqrt{\mathcal{L}_\tau(\theta)}. \]
}

\jm{To show that the estimate~\eqref{eq:abstractEstimate} can not in general be established for any stronger norms, we assume that it holds for some $s\in\mathbb R$. As in the proof of Theorem \ref{thm:MainTheorem2} we define the energy, this time penalising boundary values
\[ E_\tau\colon H^2(\Omega) \to \mathbb{R},\quad E_\tau(u) \coloneqq \lVert \Delta u + f \rVert^2_{L^2(\Omega)} + \tau \lVert u - g  \rVert_{L^2(\partial\Omega)}^2. \]
If the estimate~\eqref{eq:abstractEstimate} holds for general parametric classes, this yields
\[
\lVert v - u_f \rVert_{H^s(\Omega)}^2 \leq c \cdot E_\tau(v) \quad \text{for all } v\in H^2(\Omega), f\in L^2(\Omega), g\in H^{3/2}(\partial\Omega).
\]
Choosing \(f=0\) and \(g=0\) yields 
    \[ \lVert v \rVert_{H^s(\Omega)}^2 \le c\cdot E_\tau(v) = c \cdot \left( \lVert \Delta v \rVert_{L^2(\Omega)}^2 + \tau \lVert v \rVert_{L^2(\partial\Omega)}^2\right) \quad \text{for all } v\in H^2(\Omega). \]
For \(h\in H^{3/2}(\partial\Omega)\) let \(u_h\in H^2(\Omega)\) denote the unique harmonic extension, i.e., the solution of 
\begin{equation*}
    \begin{split}
        -\Delta u_h & = 0 \quad \text{in } \Omega \\
        u_h & = h \quad \text{on } \partial\Omega.
    \end{split}
\end{equation*}
Now we have
\begin{equation}\label{eq:help}
    \lVert h \rVert_{H^{s-1/2}(\partial\Omega)}^2 \le c \rVert u_h \rVert_{H^s(\Omega)}^2 \le \tilde c\left( \lVert \Delta u_h \rVert_{L^2(\Omega)}^2 + \tau \lVert u_h \rVert_{L^2(\partial\Omega)}^2\right) = \tilde c \tau \cdot \lVert h \rVert_{L^2(\partial\Omega)}^2
\end{equation}
for all $h\in H^{3/2}(\partial\Omega)$.
In order to see that this implies $s\le1/2$ we assume the contraty and set $\varepsilon\coloneqq s-1/2>0$. Then, the embedding $H^{3/2}(\partial\Omega) \hookrightarrow H^\varepsilon(\partial\Omega)$ is dense and hence~\eqref{eq:help} extends to $h\in H^\varepsilon(\partial\Omega)$. 
This yields that all norms $\lVert\cdot\rVert_{H^\delta(\partial\Omega)}$ for $\delta\in(0, \varepsilon)$ are equivalent to $\lVert\cdot\rVert_{L^2(\partial\Omega)}$, which implies that all spaces $H^{\delta}(\partial\Omega)$ agree which constitutes a contradiction.}
\end{proof}

\begin{remark}[Stronger estimates through stronger penalty]
We have seen that the $L^2(\partial\Omega)$ penalisation can not lead to estimates in a stronger Sobolev norm than $H^{1/2}(\Omega)$. However, inspecting inequality \eqref{eq:RegularityEstimate} one could -- at least in theory -- penalise the boundary values in the $H^{3/2}(\partial\Omega)$ norm and would then obtain $H^2(\Omega)$ estimates. As the $H^{3/2}(\partial\Omega)$ norm is difficult to approximate in practice, this is no feasible numerical approach.
\end{remark}
\begin{remark}[Stronger estimates through interpolation]
It is possible to bound the \(H^s\) error for \(s\ge1/2\) of residual minimisation with \(L^2\) boundary penalty for the expense of worse rates and under the cost of an additional factor for which it is not clear whether it is bounded.
Similar to \cite{biswas2020error} one can use an interpolation inequality for \(s\in[1/2, 2]\) to obtain
    \[ \lVert u \rVert_{H^s(\Omega)} \le \lVert u \rVert_{H^{1/2}(\Omega)}^{2(2-s)/3} \cdot \lVert  u \rVert_{H^2(\Omega)}^{(2s-1)/3} \quad \text{for all } u\in H^2(\Omega). \]
Together with the a posteriori estimate on the \(H^{1/2}\) norm, this yields 
\begin{align*}
    \lVert u_f-u_\theta \rVert_{H^s(\Omega)} & \le \lVert u_f-u_\theta \rVert_{H^{1/2}(\Omega)}^{2(2-s)/3} \cdot \lVert u_f-u_\theta \rVert_{H^2(\Omega)}^{(2s-1)/3} \precsim \lVert u_f-u_\theta \rVert_{H^2(\Omega)}^{(2s-1)/3} \cdot L(\theta)
    ^{(2-s)/3} \\ &\le \left(\lVert u_f \rVert_{H^2(\Omega)} + \lVert u_\theta \rVert_{H^2(\Omega)}\right)^{(2s-1)/3} \cdot L(\theta)^{(2-s)/3}.
\end{align*}
Hence, if it is possible to control the \(H^2\) norm of the neural network functions, one obtains an a posteriori estimate on the \(H^s\) error. Note however, that the \(H^2\) norm of the neural networks functions is not controlled through the loss function \(L\) and hence, this estimates requires an additional explicit or implicit control on the \(H^2\) norm in order to be informative.
Note, however, 
that the power of the a posteriori estimate decreases towards zero for \(s\to2\) and the estimate collapses to a trivial bound for \(s=2\).
\end{remark}

\subsection{Higher Order Sobolev Norms as a Residual Measurement}\label{sec:HigherOrderResidual}
We discuss the potential benefit of using (higher order) Sobolev norms to measure the residual, as was already proposed by \cite{son2021sobolev}. We are again supposing the exact enforcement of boundary conditions. Our precise setting is the following.

\begin{setting}\label{setting:SobolevTraining}
Let $p\in(1,\infty)$ and $k\geq0$ be fixed. Assume that $\Omega\subseteq\mathbb{R}^d$ is a bounded, open domain with $C^{k+1,1}$ boundary and let $f\in W^{k,p}(\Omega)$ and $g\in W^{2+k-1/p,p}(\partial\Omega)$. Denote by $u_f$ the solution to \eqref{eq:Dir}. Furthermore, let $\Theta$ be a parameter set of a neural network class, such that for every $\theta \in \Theta$ we have $u_\theta \in W^{k+2,p}(\Omega)$ and $u|_{\partial\Omega} = g$. We define the loss function
\begin{equation}\label{eq:SobolevLossFunction}
    \mathcal{L}\colon\Theta\to \mathbb{R}, \quad \mathcal{L}(\theta) = \lVert \Delta u_\theta + f\rVert^p_{W^{k,p}(\Omega)}.
\end{equation}
\end{setting}

In total analogy to Theorem \ref{thm:MainTheorem1} we obtain the following result.
\begin{theorem}
Assume we are in the situation of Setting \ref{setting:SobolevTraining}, then it holds for every $\theta\in\Theta$ that 
\[
    \lVert u_\theta - u_f \rVert_{W^{k+2,p}(\Omega)} \leq C_{\textrm{reg}}(p,k)\sqrt[p]{\mathcal{L}(\theta)}.
\]
\end{theorem}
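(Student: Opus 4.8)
The plan is to mimic the proof of Theorem~\ref{thm:MainTheorem1} almost verbatim, replacing the $H^2$ regularity estimate built into Setting~\ref{setting:laplace} by its $W^{k+2,p}$ analogue. First I would set $w \coloneqq u_\theta - u_f$. Since both $u_\theta$ and $u_f$ lie in $W^{k+2,p}(\Omega)$ and share the same trace $g$ on $\partial\Omega$ by the hypotheses of Setting~\ref{setting:SobolevTraining}, the difference $w$ lies in $W^{k+2,p}(\Omega) \cap W^{1,p}_0(\Omega)$. A direct computation using $-\Delta u_f = f$ gives $\Delta w = \Delta u_\theta - \Delta u_f = \Delta u_\theta + f$, so that $w$ solves the homogeneous Dirichlet problem with right-hand side $\Delta u_\theta + f$.

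The substantive ingredient is the $L^p$ elliptic regularity estimate
\[
    \lVert v \rVert_{W^{k+2,p}(\Omega)} \leq C_{\textrm{reg}}(p,k) \lVert \Delta v \rVert_{W^{k,p}(\Omega)} \quad \text{for all } v \in W^{k+2,p}(\Omega) \cap W^{1,p}_0(\Omega),
\]
which is the direct generalisation of the $H^2$ regularity of Setting~\ref{setting:laplace} (recovered at $k=0$, $p=2$). This is where the remaining hypotheses enter: the assumption $\partial\Omega \in C^{k+1,1}$ is exactly the boundary regularity required for the Agmon--Douglis--Nirenberg / Calder\'on--Zygmund theory to bootstrap solutions of the Dirichlet Laplacian up to $W^{k+2,p}$, while $g \in W^{2+k-1/p,p}(\partial\Omega)$ is the correct trace space for $W^{k+2,p}(\Omega)$, which guarantees $u_f \in W^{k+2,p}(\Omega)$ in the first place. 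The constant $C_{\textrm{reg}}(p,k)$ is then the operator norm of the solution operator of the Dirichlet Laplacian viewed as a map $W^{k,p}(\Omega) \to W^{k+2,p}(\Omega) \cap W^{1,p}_0(\Omega)$; I would cite \cite{grisvard2011elliptic} for its existence rather than attempt an explicit bound.

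Finally, applying the regularity estimate to $v = w$ and inserting $\Delta w = \Delta u_\theta + f$ yields
\[
    \lVert u_\theta - u_f \rVert_{W^{k+2,p}(\Omega)} \leq C_{\textrm{reg}}(p,k) \lVert \Delta u_\theta + f \rVert_{W^{k,p}(\Omega)} = C_{\textrm{reg}}(p,k) \sqrt[p]{\mathcal{L}(\theta)},
\]
where the $p$-th root simply undoes the $p$-th power in the definition~\eqref{eq:SobolevLossFunction} of the loss, exactly as the square root did in Theorem~\ref{thm:MainTheorem1}. I do not expect a genuine obstacle in the argument itself, which is a routine translation of Theorem~\ref{thm:MainTheorem1}; the only point requiring care is verifying that the cited $L^p$ regularity theory applies with precisely the stated boundary regularity and data spaces. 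Unlike the convex-domain case of Theorem~\ref{thm:MainTheorem1}, no explicit bound on $C_{\textrm{reg}}(p,k)$ is readily available, so the result is necessarily qualitative in the constant.
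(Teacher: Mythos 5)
Your proposal is correct and follows essentially the same route as the paper: the paper's proof likewise reduces the claim to the fact that $-\Delta\colon W^{k+2,p}(\Omega)\cap W^{1,p}_0(\Omega)\to W^{k,p}(\Omega)$ is a linear homeomorphism under the hypotheses of Setting~\ref{setting:SobolevTraining} (citing chapter 2.5 of \cite{grisvard2011elliptic}), with $C_{\textrm{reg}}(p,k)$ the operator norm of its inverse. You merely make explicit the step the paper leaves implicit -- that $w = u_\theta - u_f$ has vanishing trace and satisfies $\Delta w = \Delta u_\theta + f$ -- which is exactly the reduction used in Theorem~\ref{thm:MainTheorem1}.
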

\begin{proof}
The essential ingredient is the $L^p$ regularity theory that holds under the assumptions made in Setting \ref{setting:SobolevTraining}, see for instance chapter 2.5 in \cite{grisvard2011elliptic}. The relevant result is that
\[
-\Delta\colon W^{k+2,p}(\Omega)\cap W^{1,p}_0(\Omega)\to W^{k,p}(\Omega)
\]
is a linear homeomorphism, where $C_{\textrm{reg}}(p,k)$ denotes the operator norm of its inverse.
\end{proof}
\begin{remark}
The above result might be interesting if approximation of higher derivatives is desired. Furthermore, the empirical findings of \cite{son2021sobolev} suggest that measuring the residual in a Sobolev norm might lead to fewer iterations in a gradient based optimization routine. 
\end{remark}

\section{Estimates for Parabolic Equations}\label{sec:Parabolic}
The same observation made for the Poisson equation can be exploited for linear parabolic equations when both initial and boundary values are satisfied exactly by the ansatz class. Here, the key is maximal parabolic $L^2$ regularity theory. We begin by describing our setting.

\begin{setting}\label{setting:parabolic}
We consider again a domain $\Omega\subseteq\mathbb{R}^d$ that is $H^2$ regular for the Laplacian and a finite time interval $I = [0,T]$. For $f\in L^2(I,L^2(\Omega))$, $g\in H^{3/2}(\partial\Omega)$ and $u_0 \in H^1_0(\Omega)$ we consider the parabolic problem
\begin{equation}
    \begin{split}
        d_tu - \Delta u & = f \quad \text{in } I\times\Omega \\
    u(t)|_{\partial\Omega} &= g \quad \text{for all }t\in I \\
    u(0) & = u_0.
    \end{split}
\end{equation}
Let $\Theta$ be a parameter set of a neural network class such that for every $\theta\in\Theta$ the function $u_\theta$ is a member of the space 
\[
    \mathcal{X} = H^1(I,L^2(\Omega))\cap L^2(I,H^2(\Omega)\cap H^1_g(\Omega)), \mz \quad \lVert u \rVert_{\mathcal{X}} = \lVert d_tu \rVert_{L^2(I,L^2(\Omega))} + \lVert u \rVert_{L^2(I,H^2(\Omega)) \eee}
\] 
with $u_\theta(0) = u_0$. This means that both initial and boundary conditions are satisfied exactly. For an introduction to vector-valued Sobolev spaces we refer the reader to \cite{boyer2012mathematical}. Then we define the loss function
\[\mathcal{L}(\theta) = \lVert d_tu_\theta -\Delta u_\theta - f \rVert^2_{L^2(I,L^2(\Omega))}\]
\end{setting}

The following theorem is analogue to the case of the Laplacian and relies on a parabolic regularity result.

\begin{theorem}\label{thm:parabolic}
Assume xwe are in Setting \ref{setting:parabolic}. Then it holds for all $\theta \in \Theta$ that 
\[ \lVert  u_\theta - u_f \rVert_{\mathcal{X}} \leq C\sqrt{\mathcal{L}(\theta)} \]
\end{theorem}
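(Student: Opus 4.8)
The plan is to mirror the strategy of Theorem~\ref{thm:MainTheorem1} exactly, replacing elliptic $H^2$ regularity with maximal parabolic $L^2$ regularity. The key observation is that the difference $w \coloneqq u_\theta - u_f$ lies in the space $\mathcal{X}$ and, crucially, satisfies \emph{homogeneous} boundary and initial data: since both $u_\theta$ and $u_f$ agree with $g$ on $\partial\Omega$ and with $u_0$ at time $t=0$, we have $w(t)|_{\partial\Omega} = 0$ for all $t$ and $w(0) = 0$. Moreover, by linearity $w$ solves the parabolic equation
\[ d_t w - \Delta w = (d_t u_\theta - \Delta u_\theta) - (d_t u_f - \Delta u_f) = (d_t u_\theta - \Delta u_\theta - f), \]
so the right-hand side of the equation solved by $w$ is precisely the residual whose $L^2(I,L^2(\Omega))$ norm squared equals $\mathcal{L}(\theta)$.

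The central step is to invoke maximal parabolic $L^2$ regularity for the heat equation with homogeneous Dirichlet boundary and zero initial condition. On an $H^2$-regular domain $\Omega$, this theory guarantees that the solution operator sending the right-hand side $F \in L^2(I,L^2(\Omega))$ to the solution $w$ of $d_t w - \Delta w = F$, $w(t)|_{\partial\Omega}=0$, $w(0)=0$, is a bounded isomorphism onto $\mathcal{X}_0 \coloneqq H^1(I,L^2(\Omega)) \cap L^2(I,H^2(\Omega)\cap H^1_0(\Omega))$. Concretely, there is a constant $C>0$, depending only on $\Omega$ and $T$, with
\[ \lVert w \rVert_{\mathcal{X}} \leq C \lVert F \rVert_{L^2(I,L^2(\Omega))}. \]
I would cite the standard reference for this (for instance the maximal regularity results in \cite{boyer2012mathematical}), since the $H^2$ regularity of $\Omega$ for the Laplacian is exactly the spatial ingredient that upgrades the elliptic part of the estimate to the full $\mathcal{X}$ norm. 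Applying this bound with $F = d_t u_\theta - \Delta u_\theta - f$ yields
\[ \lVert u_\theta - u_f \rVert_{\mathcal{X}} \leq C \lVert d_t u_\theta - \Delta u_\theta - f \rVert_{L^2(I,L^2(\Omega))} = C\sqrt{\mathcal{L}(\theta)}, \]
which is the claimed estimate.

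The main obstacle, and the only place where genuine care is needed, is verifying that the parabolic regularity estimate is applicable in the precise functional-analytic framework of Setting~\ref{setting:parabolic}. In particular one must check that the compatibility between the initial datum $u_0 \in H^1_0(\Omega)$ and the homogeneous boundary condition is consistent with the trace theory for the space $\mathcal{X}_0$ (the time-trace at $t=0$ of functions in $\mathcal{X}_0$ lands in an interpolation space of the form $H^1_0(\Omega)$, so the vanishing of $w(0)$ is meaningful and the estimate does not pick up an initial-data contribution). Once the residual is identified as the forcing term and the homogeneity of the boundary and initial traces of $w$ is established, the result follows directly from the cited maximal regularity theorem, exactly in parallel to the elliptic argument of Theorem~\ref{thm:MainTheorem1}.
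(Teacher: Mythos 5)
Your proof is correct and follows essentially the same route as the paper: both reduce to the observation that $w = u_\theta - u_f$ has homogeneous initial and boundary data with the residual as forcing, and both conclude via maximal parabolic $L^2$ regularity, i.e.\ that $d_t - \Delta$ is a linear homeomorphism from $H^1_0(I,L^2(\Omega))\cap L^2(I,H^2(\Omega)\cap H^1_0(\Omega))$ onto $L^2(I,L^2(\Omega))$, with $C$ the operator norm of the inverse. The only differences are cosmetic --- the paper cites \cite{arendt2017jl} rather than \cite{boyer2012mathematical} for maximal regularity, and your explicit trace-compatibility check (that $w(0)=0$ is meaningful since the time trace of $\mathcal{X}$ lands in $H^1_0(\Omega)$) is a detail the paper leaves implicit.
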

\begin{proof}
    We denote by $H^1_0(I,L^2(\Omega))$ the vector-valued Sobolev space with vanishing initial values.
    Maximal parabolic $L^2(\Omega)$ regularity theory tells us that 
    \begin{equation*}
        d_t - \Delta\colon H_0^1(I,L^2(\Omega))\cap L^2(I,H^2(\Omega)\cap H^1_0(\Omega)) \longrightarrow L^2(I,L^2(\Omega))
    \end{equation*}
    is a linear homeomorphism and this implies the assertion, see for instance \cite{arendt2017jl} for more information on maximal parabolic regularity. The constant $C$ is then the operatornorm of $(d_t - \Delta)^{-1}$.
\end{proof}
\begin{remark}
Of course this result is not limited to the heat equation. Indeed one can replace $-\Delta$ by a self-adjoint, coercive operator that satisfies $H^2(\Omega)$ regularity, we refer the reader again to \cite{arendt2017jl} for the corresponding regularity theory. \mz For information on the dependency of the constant $C$ on data, we refer to \cite{amann1995linear}, especially Theorem 4.10.8. \eee
\end{remark}
\begin{remark}
 \cite{mishra2022estimates} report error estimates for parabolic equations not enforcing initial and boundary conditions in the ansatz architecture. We stress that even though the solutions there are assumed to be classical, smooth solutions the error is only estimated in the $L^2(I\times\Omega)$ norm which is weaker than the estimates presented here. This is again due to advantage of exact boundary and initial conditions.
\end{remark}

\acks{The authors want to thank Luca Courte, Patrick Dondl and Stephan Wojtowytsch for their valuable comments. 
JM acknowledges support by the Evangelisches Studienwerk e.V. (Villigst), the International Max Planck Research School for Mathematics in the Sciences (IMPRS MiS) and the European Research Council (ERC) under the EuropeanUnion’s Horizon 2020 research and innovation programme (grant number 757983). MZ acknowledges support from BMBF within the e:Med program in the SyMBoD consortium (grant number 01ZX1910C) and the Research Council of Norway (grant number 303362).}

\bibliography{references}

\begin{thebibliography}{26}
\providecommand{\natexlab}[1]{#1}
\providecommand{\url}[1]{\texttt{#1}}
\expandafter\ifx\csname urlstyle\endcsname\relax
  \providecommand{\doi}[1]{doi: #1}\else
  \providecommand{\doi}{doi: \begingroup \urlstyle{rm}\Url}\fi

\bibitem[Amann(1995)]{amann1995linear}
Herbert Amann.
\newblock \emph{Linear and Quasilinear Parabolic Problems: Volume I: Abstract
  Linear Theory}, volume~1.
\newblock Springer Science \& Business Media, 1995.

\bibitem[Arendt et~al.(2017)Arendt, Dier, and Fackler]{arendt2017jl}
Wolfgang Arendt, Dominik Dier, and Stephan Fackler.
\newblock {JL Lions' Problem on Maximal Regularity}.
\newblock \emph{Archiv der Mathematik}, 109\penalty0 (1):\penalty0 59--72,
  2017.

\bibitem[Berg and Nystr{\"o}m(2018)]{berg2018unified}
Jens Berg and Kaj Nystr{\"o}m.
\newblock {A unified deep artificial Neural Network Approach to Partial
  Differential Equations in complex Geometries}.
\newblock \emph{Neurocomputing}, 317:\penalty0 28--41, 2018.

\bibitem[Biswas et~al.(2020)Biswas, Tian, and Ulusoy]{biswas2020error}
Animikh Biswas, Jing Tian, and Suleyman Ulusoy.
\newblock {Error Estimates for Deep Learning Methods in Fluid Dynamics}.
\newblock \emph{arXiv preprint arXiv:2008.02844}, 2020.

\bibitem[Boyer and Fabrie(2012)]{boyer2012mathematical}
Franck Boyer and Pierre Fabrie.
\newblock \emph{{Mathematical Tools for the Study of the Incompressible
  Navier-Stokes Equations and related Models}}, volume 183.
\newblock Springer Science \& Business Media, 2012.

\bibitem[Chen et~al.(2020)Chen, Du, and Wu]{chen2020comparison}
Jingrun Chen, Rui Du, and Keke Wu.
\newblock {A Comparison Study of Deep Galerkin Method and Deep Ritz Method for
  Elliptic Problems with Different Boundary Conditions}.
\newblock \emph{arXiv e-prints}, pages arXiv--2005, 2020.

\bibitem[Courte and Zeinhofer(2021)]{courte2021robin}
Luca Courte and Marius Zeinhofer.
\newblock Robin pre-training for the deep ritz method.
\newblock \emph{arXiv preprint arXiv:2106.06219}, 2021.

\bibitem[Cyr et~al.(2020)Cyr, Gulian, Patel, Perego, and Trask]{cyr2020robust}
Eric~C Cyr, Mamikon~A Gulian, Ravi~G Patel, Mauro Perego, and Nathaniel~A
  Trask.
\newblock {Robust Training and Initialization of Deep Neural Networks: An
  adaptive Basis Viewpoint}.
\newblock In \emph{Mathematical and Scientific Machine Learning}, pages
  512--536. PMLR, 2020.

\bibitem[Dissanayake and Phan-Thien(1994)]{dissanayake1994neural}
MWMG Dissanayake and Nhan Phan-Thien.
\newblock {Neural-Network-based Approximations for solving Partial Differential
  Equations}.
\newblock \emph{Communications in Numerical Methods in Engineering},
  10\penalty0 (3):\penalty0 195--201, 1994.

\bibitem[E and Yu(2018)]{weinan2018deep}
Weinan E and Bing Yu.
\newblock {The Deep Ritz method: a Deep Learning-based numerical Algorithm for
  solving Variational Problems}.
\newblock \emph{Communications in Mathematics and Statistics}, 6\penalty0
  (1):\penalty0 1--12, 2018.

\bibitem[Evans(1998)]{evans1998partial}
Lawrence~C Evans.
\newblock \emph{{Partial Differential Equations}}, volume~19.
\newblock Rhode Island, USA, 1998.

\bibitem[Grisvard(2011)]{grisvard2011elliptic}
Pierre Grisvard.
\newblock \emph{{Elliptic Problems in nonsmooth Domains}}.
\newblock SIAM, 2011.

\bibitem[Jost(2003)]{jostpartial}
J\"urgen Jost.
\newblock \emph{{Partial Differential Equations}}.
\newblock Springer, 2003.

\bibitem[Krishnapriyan et~al.(2021)Krishnapriyan, Gholami, Zhe, Kirby, and
  Mahoney]{krishnapriyan2021characterizing}
Aditi Krishnapriyan, Amir Gholami, Shandian Zhe, Robert Kirby, and Michael~W
  Mahoney.
\newblock Characterizing possible failure modes in physics-informed neural
  networks.
\newblock \emph{Advances in Neural Information Processing Systems}, 34, 2021.

\bibitem[Lagaris et~al.(1998)Lagaris, Likas, and
  Fotiadis]{lagaris1998artificial}
Isaac~E Lagaris, Aristidis Likas, and Dimitrios~I Fotiadis.
\newblock {Artificial Neural Networks for solving Ordinary and Partial
  Differential Equations}.
\newblock \emph{IEEE transactions on neural networks}, 9\penalty0 (5):\penalty0
  987--1000, 1998.

\bibitem[Lyu et~al.(2020)Lyu, Wu, Du, and Chen]{lyu2020enforcing}
Liyao Lyu, Keke Wu, Rui Du, and Jingrun Chen.
\newblock {Enforcing exact Boundary and Initial Conditions in the deep mixed
  Residual Method}.
\newblock \emph{arXiv preprint arXiv:2008.01491}, 2020.

\bibitem[Mishra and Molinaro(2022)]{mishra2022estimates}
Siddhartha Mishra and Roberto Molinaro.
\newblock Estimates on the generalization error of physics-informed neural
  networks for approximating pdes.
\newblock \emph{IMA Journal of Numerical Analysis}, 2022.

\bibitem[M{\"u}ller and Zeinhofer(2021)]{muller2021error}
Johannes M{\"u}ller and Marius Zeinhofer.
\newblock {Error Estimates for the Variational Training of Neural Networks with
  Boundary Penalty}.
\newblock \emph{arXiv preprint arXiv:2103.01007}, 2021.

\bibitem[Raissi and Karniadakis(2018)]{raissi2018hidden}
Maziar Raissi and George~Em Karniadakis.
\newblock Hidden physics models: Machine learning of nonlinear partial
  differential equations.
\newblock \emph{Journal of Computational Physics}, 357:\penalty0 125--141,
  2018.

\bibitem[Raissi et~al.(2019)Raissi, Perdikaris, and
  Karniadakis]{raissi2019physics}
Maziar Raissi, Paris Perdikaris, and George~E Karniadakis.
\newblock Physics-informed neural networks: A deep learning framework for
  solving forward and inverse problems involving nonlinear partial differential
  equations.
\newblock \emph{Journal of Computational physics}, 378:\penalty0 686--707,
  2019.

\bibitem[Roth et~al.(2021)Roth, Schr{\"o}der, and Wick]{roth2021neural}
Julian Roth, Max Schr{\"o}der, and Thomas Wick.
\newblock {Neural Network guided adjoint Computations in dual weighted residual
  Error Estimation}.
\newblock \emph{arXiv preprint arXiv:2102.12450}, 2021.

\bibitem[Schechter(1963)]{schechter1963p}
Martin Schechter.
\newblock {On $L^p$ Estimates and Regularity II}.
\newblock \emph{Mathematica Scandinavica}, 13\penalty0 (1):\penalty0 47--69,
  1963.

\bibitem[Shin et~al.(2020)Shin, Zhang, and Karniadakis]{shin2020error}
Yeonjong Shin, Zhongqiang Zhang, and George~Em Karniadakis.
\newblock {Error Estimates of Residual Minimization using Neural Networks for
  linear PDEs}.
\newblock \emph{arXiv preprint arXiv:2010.08019}, 2020.

\bibitem[Sirignano and Spiliopoulos(2018)]{sirignano2018dgm}
Justin Sirignano and Konstantinos Spiliopoulos.
\newblock {DGM: A Deep Learning Algorithm for solving Partial Differential
  Equations}.
\newblock \emph{Journal of computational physics}, 375:\penalty0 1339--1364,
  2018.

\bibitem[Son et~al.(2021)Son, Jang, Han, and Hwang]{son2021sobolev}
Hwijae Son, Jin~Woo Jang, Woo~Jin Han, and Hyung~Ju Hwang.
\newblock {Sobolev Training for the Neural Network Solutions of PDEs}.
\newblock \emph{arXiv preprint arXiv:2101.08932}, 2021.

\bibitem[van~der Meer et~al.(2020)van~der Meer, Oosterlee, and
  Borovykh]{van2020optimally}
Remco van~der Meer, Cornelis Oosterlee, and Anastasia Borovykh.
\newblock {Optimally weighted Loss Functions for solving PDEs with Neural
  Networks}.
\newblock \emph{arXiv preprint arXiv:2002.06269}, 2020.

\end{thebibliography}

\end{document}